\numberwithin{equation}{section}
\def\F{{\mathbb F}}
\newcommand{\C}{\mathbb{C}}
\newcommand{\Z}{\mathbb{Z}} 
\newcommand{\Q}{\mathbb{Q}}
\newcommand{\FF}{\mathbb{F}}
\def\GL{\mathrm{GL}}
\def\SL{\mathrm{SL}}
\renewcommand{\to}{\longrightarrow}
\newtheorem{theorem}{Theorem}[section]  
\newtheorem{lemma}[theorem]{Lemma}
\newtheorem*{remark}{Remark}
\theoremstyle{definition}
\theoremstyle{remark}
\newtheorem*{rmk}{Remark}
\newtheorem{ind}[]{{\rm\it Indice}}
\title{Fields generated by characters of finite linear groups}
\author[Dawsey]{Madeline Locus Dawsey}
\address{Department of Mathematics, University of Texas at Tyler, Tyler, TX 75799}
\email{mdawsey@uttyler.edu}
\author[Ono]{Ken Ono}
\address{Department of Mathematics, University of Virginia, Charlottesville, VA 22904}
\email{ko5wk@virginia.edu}
\author[Wagner]{Ian Wagner}
\address{Department of Mathematics, Vanderbilt University, Nashville, TN 37212}
\email{ian.c.wagner@vanderbilt.edu}
\begin{document}

\thanks{The first author is grateful for the support of an AMS-Simons Grant. The second author thanks the generous support of the Asa Griggs Candler Fund at Emory University, the Thomas Jefferson Fund at the University of Virginia, and the NSF (DMS-1601306 and DMS-2002265).}
\subjclass[2010]{20B30, 20C30, 11R20}
\keywords{Hilbert's 12th Problem, group characters}

\begin{abstract} 
In previous work \cite{DOW}, the authors confirmed the speculation of  J. G. Thompson that certain multiquadratic fields
are generated by specified character values of sufficiently large alternating groups $A_n$.
 Here we address the natural generalization of this speculation to the finite general linear groups $\GL_m\left(\F_q\right)$ and $\SL_2\left(\F_q\right)$.
\end{abstract}

\maketitle

\section{Introduction and Statement of Results}

Let $G$ be a finite group, and let $K(G)$ denote the field generated over $\Q$ by the character values of $G$.  G. R. Robinson and J. G. Thompson \cite{RT} proved for alternating groups $A_n$ that the $K\left(A_n\right)$ are generally large multiquadratic extensions of $\mathbb{Q}$.  For $n>24$, they proved that
$$K\left(A_n\right)=\Q\left(\{ \sqrt{p^*} \ : \ p\leq n \ {\text {\rm an odd prime with\ } p\neq n-2}\}\right),$$
where $m^*:=(-1)^{\frac{m-1}{2}}m$ for any odd integer $m$.

In a letter \cite{T} to the second author in 1994, Thompson asked whether a refinement of this result
exists that is analogous to the Kronecker--Weber Theorem and the theory of complex multiplication, where
abelian extensions are generated by the values of suitable functions at arguments which determine the ramified primes.  Instead of adjoining special values of analytic functions to number fields, which is the substance of {\it Hilbert's 12th Problem} \cite{H}, Thompson suggested adjoining character values of $A_n$.

The authors recently answered this question \cite{DOW}.  Let $\pi:=\{p_1, p_2,\dots, p_t\}$ be a set of $t\geq 2$ distinct odd primes\footnote{The phenomenon cannot hold when $t=1$ for any $n\not\equiv 0, 1\pmod{p}$.} listed in increasing order. A $\pi$-{\it number} is a positive integer whose prime factors belong to $\pi$.  Thompson's prediction was that in general $K_{\pi}\left(A_n\right)$, the field generated over $\mathbb{Q}$ by the values of $A_n$-characters restricted to elements $\sigma$ of $A_n$ with $\pi$-number order, should be the field $\Q\left(\sqrt{p_1^*}, \sqrt{p_2^*},\dots, \sqrt{p_t^*}\right)$.  Although counterexamples exist for each $\pi$, the authors proved \cite{DOW}, for sufficiently large $n$, that 
$$K_{\pi}\left(A_n\right)=\Q\left(\sqrt{p_1^*}, \sqrt{p_2^*},\dots, \sqrt{p_t^*}\right).$$

In this note we consider Thompson's problem for the finite general linear groups $\GL_m\left(\F_q\right)$ and $\SL_2\left(\F_q\right)$, where $\F_q$ is a finite field.  Throughout, we let $p$ be a prime, and let $q:=p^n$.   Let $\zeta_n$ denote the $n$th root of unity $\zeta_n:=e^{2\pi i/n}$, and let $\mathbb{Z}^\times_{n}:=\left(\mathbb{Z}/n\mathbb{Z}\right)^\times$.  
Thanks to  classical work of Green \cite{G},  it is not difficult to determine the number fields
 generated by the $\GL_m\left(\F_q\right)$-character values. To ease notation, for positive integers $d, r\in \Z^{+}$, we let
 \begin{equation}
 \omega_d(r):=\sum_{k=0}^{d-1}\zeta_{q^d-1}^{rq^k}.
 \end{equation}

\begin{theorem}\label{GLCase}\label{Thm1}
If $m\geq2$, then we have
\begin{equation*}
K\left(\GL_m\left(\F_q\right)\right)=\mathbb{Q}\left(\left\{\omega_d(r)  \ : \ 1\leq d\leq m \ {\rm{and}} \ r\in\mathbb{Z}^\times_{q^d-1}\right\}\right).
\end{equation*}                                                                 
\end{theorem}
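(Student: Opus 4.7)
My plan is to invoke Green's classification \cite{G} of the irreducible characters of $\GL_m(\F_q)$. Green parametrizes these characters $\chi_\Lambda$ by assignments $\Lambda$ that send each Frobenius orbit $\theta$ of multiplicative characters of $\overline{\F_q}^\times$ to a partition $\Lambda(\theta)$, subject to the total weight constraint $\sum_\theta |\Lambda(\theta)| \cdot |\theta| = m$, where $|\theta|$ is the size of the orbit. Green's character formula expresses each value $\chi_\Lambda(g)$ as a polynomial with rational coefficients in elementary symmetric sums of the form
$$\theta(\xi) + \theta(\xi^q) + \cdots + \theta(\xi^{q^{d-1}}),$$
where $\xi \in \F_{q^d}^\times$ is an eigenvalue of a degree-$d$ irreducible block of $g$ and $\theta$ is a multiplicative character of $\F_{q^d}^\times$. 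Choosing $\xi$ to be a primitive element and $\theta$ such that $\theta(\xi) = \zeta_{q^d-1}^r$, this sum is exactly $\omega_d(r)$.

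For the inclusion $K(\GL_m(\F_q)) \subseteq \Q(\{\omega_d(r)\})$, Green's formula shows that every character value lies in the compositum of the fields $\Q(\omega_d(r))$; the bound $d \leq m$ is forced because any irreducible block in the rational canonical form of an element of $\GL_m(\F_q)$ has degree at most $m$, and every orbit-sum appearing in the formula is of the displayed type.

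For the reverse inclusion, for each $1 \leq d \leq m$ and $r \in \Z^\times_{q^d-1}$ I would exhibit a regular semisimple element $g \in \GL_m(\F_q)$ whose rational canonical form has a single irreducible block of degree $d$ with eigenvalue a primitive generator of $\F_{q^d}^\times$ (together with $m-d$ trivial blocks), and an irreducible character $\chi_\Lambda$ for which $\Lambda$ is supported on a single Frobenius orbit of characters of $\F_{q^d}^\times$ of orbit size $d$ (the orbit associated to the exponent $r$). Green's formula evaluated on such a pair should produce $\chi_\Lambda(g) = \pm\omega_d(r)$, certifying $\omega_d(r) \in K(\GL_m(\F_q))$. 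Equivalently, one can induce cuspidal characters from the Coxeter torus of $\GL_d(\F_q)$ embedded in $\GL_m(\F_q)$ via Deligne--Lusztig theory.

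The main obstacle is the combinatorial bookkeeping inside Green's formula: confirming that the exhibited characters realize $\omega_d(r)$ exactly (rather than a Galois conjugate or an element of a proper subfield), and verifying that in the generic formula no orbit-sum over a Frobenius orbit of length $d > m$ can appear. Once these normalizations are pinned down and the restriction $r \in \Z_{q^d-1}^{\times}$ (forcing the orbit to have full length $d$) is handled, the two inclusions combine to prove Theorem~\ref{Thm1}.
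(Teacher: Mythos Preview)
Your proposal is correct and follows essentially the same route as the paper: both arguments invoke Green's description \cite{G} of the irreducible characters of $\GL_m(\F_q)$ to show that every character value is built (over $\Q$) from the Frobenius orbit-sums $\omega_d(r)$ with $d\le m$, and that each such sum is realized by a specific cuspidal (Coxeter-torus) character on a primary class. The only cosmetic difference is that the paper phrases Green's classification in its original ``product of cuspidal characters'' form (Green's Theorem~13), whereas you use the equivalent modern parametrization by partition-valued functions on Frobenius orbits; your treatment of the reverse inclusion is in fact more explicit than the paper's, which simply cites Green and concludes.
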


Turning to Thompson's speculation, we determine the subfields of $K\left(\GL_m\left(\F_q\right)\right)$ generated by the character values of elements of prime power order $\ell^r$. Confirming Thompson's speculation, we find that these fields systematically correspond to subfields with ramification only at $\ell$.
Let $\ell$ be prime, and let $r\geq1$ be an integer.  Define $K_{\ell^r}\left(\GL_m\left(\F_q\right)\right)$ to be the field generated over $\mathbb{Q}$ by the values of $\GL_m\left(\F_q\right)$-characters evaluated at elements of order $\ell^r$.  For primes $\ell \neq p$, we let $\mathrm{ord}_{\ell^r}(q)$ denote the multiplicative order of $q$ modulo $\ell^r$.  

\begin{theorem}\label{GLEllPower}\label{Thm2}
 Suppose that $\ell \neq p$ is an odd prime. If there is an element of order $\ell^r$ in $\GL_m\left(\F_q\right)$, then $K_{\ell^r}\left(\GL_m\left(\F_q\right)\right)$ is the unique subfield of $\Q\left( \zeta_{\ell^r}\right)$ of degree $\frac{\ell^{r-1}(\ell -1)}{\mathrm{ord}_{\ell^r}(q)}$ over $\Q$.
\end{theorem}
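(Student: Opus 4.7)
Let $d_0 := \ord_{\ell^r}(q)$, and let $L \subset \Q(\zeta_{\ell^r})$ denote the unique subfield of degree $\ell^{r-1}(\ell-1)/d_0$ over $\Q$. By Galois theory $L$ is the fixed field of the cyclic subgroup $\langle q\rangle\le (\Z/\ell^r\Z)^\times \cong \mathrm{Gal}(\Q(\zeta_{\ell^r})/\Q)$, and is therefore generated over $\Q$ by the Gauss-type periods
\[ \eta_a \; := \; \sum_{k=0}^{d_0-1}\zeta_{\ell^r}^{a q^k}, \qquad a\in (\Z/\ell^r\Z)^\times, \]
which are precisely the values $\omega_{d_0}\!\bigl((q^{d_0}-1)a/\ell^r\bigr)$ from Theorem \ref{GLCase}, after choosing $\zeta_{q^{d_0}-1}$ so that $\zeta_{q^{d_0}-1}^{(q^{d_0}-1)/\ell^r} = \zeta_{\ell^r}$. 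The plan is to prove $K_{\ell^r}(\GL_m(\F_q)) = L$ by two containments, both going through Green's explicit character formulas \cite{G}.

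For the upper bound $K_{\ell^r}(\GL_m(\F_q)) \subseteq L$, any $g \in \GL_m(\F_q)$ of order $\ell^r$ is semisimple (since $\ell \neq p$) and has eigenvalues in $\overline{\F_q}$ that are $\ell^j$-th roots of unity for various $j \le r$. Green's formula expresses $\chi(g)$, for any irreducible $\chi$, as an integer polynomial in the Frobenius orbit sums $\sum_k \alpha^{q^k}$ taken over the eigenvalues $\alpha$ of $g$. Each such orbit sum lies in $\Q(\zeta_{\ell^r})$ and is invariant under the Galois action $\zeta_{\ell^r}\mapsto \zeta_{\ell^r}^q$, hence lies in $L$, so $\chi(g)\in L$.

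For the reverse $L \subseteq K_{\ell^r}(\GL_m(\F_q))$, I plan to realize each generator $\eta_a$ as (a rational multiple of) a character value on a well-chosen element of order $\ell^r$. The existence hypothesis is equivalent to $d_0 \le m$, so the Coxeter torus $T_{d_0}\cong \F_{q^{d_0}}^\times$ embeds as the top-left block of a maximal torus $T \cong T_{d_0}\times (\F_q^\times)^{m-d_0}$ of $\GL_m(\F_q)$. For each $a\in (\Z/\ell^r\Z)^\times$ let $g_a \in T$ have first block a primitive $\ell^r$-th root of unity raised to the $a$-th power, and identity on the remaining blocks, so that $g_a$ has order exactly $\ell^r$. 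Among Green's irreducible characters there should be one --- parametrised by the Frobenius orbit of a primitive $\ell^r$-th root of unity carrying the partition $(1)$, and trivial data on every other orbit --- whose value at $g_a$ equals $\pm\eta_a$. Letting $a$ range over coset representatives of $\langle q\rangle$ in $(\Z/\ell^r\Z)^\times$ then provides enough character values to generate $L$ over $\Q$.

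The main obstacle is confirming that Green's formula collapses cleanly to $\pm\eta_a$ on the specific character/element pair above: Green's expressions involve Hall--Littlewood polynomials attached to each Frobenius orbit, and one has to verify that for the elementary partition $(1)$ these polynomials reduce to $1$, leaving just the orbit sum. A potentially cleaner substitute is Deligne--Lusztig induction from $T$: the virtual character $R_T^\theta$, for a character $\theta\colon T\to \C^\times$ of order $\ell^r$ on $T_{d_0}$ and trivial elsewhere, has relative Weyl group cyclic of order $d_0$ (generated by Frobenius), so the Deligne--Lusztig character formula directly yields $R_T^\theta(g_a) = \pm\eta_a$. Since $R_T^\theta$ is a small integer combination of irreducible characters, its values at $g_a$ lie in $K_{\ell^r}(\GL_m(\F_q))$, which is enough to force $L \subseteq K_{\ell^r}(\GL_m(\F_q))$.
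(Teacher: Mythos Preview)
Your plan is essentially the paper's: both obtain the upper bound by observing that Green's character values on order-$\ell^r$ elements are integer combinations of Frobenius orbit sums $\sum_t \zeta_{\ell^a}^{iq^t}$, which are visibly $\tau_q$-fixed and hence lie in $L=\Q(\zeta_{\ell^r})^{\langle\tau_q\rangle}$; and both obtain the lower bound by producing an actual character value that generates $L$.

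The execution of the lower bound differs slightly. You try to exhibit a single period $\eta_a$ as a character value and worry about Hall--Littlewood polynomials collapsing for the partition $(1)$, offering Deligne--Lusztig induction as a fallback. The paper avoids this entirely: it quotes Green's explicit formula for \emph{cuspidal} characters, namely $\chi_\theta(g)=\phi_f(q)\,\theta(f)$ on a primary class attached to an irreducible $f$, where $\theta(f)=\sum_k\theta(\alpha^{q^k})$ is exactly the orbit sum and $\phi_f(q)\in\Z$. Since every irreducible character of $\GL_m(\F_q)$ is a product of cuspidals (Green's Theorem~13), the character values at your element $g_a$ are integer multiples of products $\prod_a\bigl(\sum_t \zeta_{\ell^a}^{i_a q^t}\bigr)$ with $i_r\in(\Z/\ell^r\Z)^\times$, with no Hall--Littlewood combinatorics in sight. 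The paper then packages the lower bound as a separate Lemma~3.1, which counts Galois conjugates of such a product to show it has exactly $\ell^{r-1}(\ell-1)/k_r$ of them and hence generates $L$. Your direct route via a single period $\eta_a$ is arguably cleaner (one factor instead of a product), and once you use Green's cuspidal formula rather than the general one, the obstacle you flagged disappears and Deligne--Lusztig is unnecessary.
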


\begin{remark}
If $\tau_{q}\in{\rm{Gal}}\left(\Q\left(\zeta_{\ell^r}\right)/\Q\right)$ satisfies $\tau_q\left(\zeta_{\ell^r}\right)=\zeta_{\ell^r}^{q}$, then  $K_{\ell^r}\left(\GL_{m}\left(\F_q\right)\right) = \Q\left(\zeta_{\ell^r}\right)^{\langle \tau_{q} \rangle}$.
\end{remark}

\begin{remark}
If $\ell=p$, then $K_\ell\left(\mathrm{GL}_m\left(\F_q\right)\right)=\Q$, as shown in Section \ref{rankm}.  The proof of Theorem \ref{Thm2} also shows that $K_{2} \left(\GL_{m} \left(\F_{q} \right) \right) = \Q$ and that
\begin{equation*}
K_{4} \left(\GL_{m} \left(\F_{q} \right) \right) = \begin{cases} \Q(i) & q \equiv 1 \pmod{4} \\ \Q & \text{{\rm otherwise}}. \end{cases}
\end{equation*}
If $\ell^r=2^r$ for $r>2$, then it is not always clear which fields are generated by character values at elements of order $2^r$, since ${\rm{Gal}} \left( \Q\left(\zeta_{2^r}\right)/\Q \right)$ is no longer cyclic.  
\end{remark}

Now we turn to the case of the groups $\SL_2\left(\F_q\right)$.
\begin{theorem}\label{Thm5}
Assuming the notation above, we have that
\begin{equation*}
K\left(\SL_{2}\left(\F_{q}\right)\right) = \begin{cases} \Q\left(\zeta_{q-1} + \zeta_{q-1}^{-1}, \zeta_{q+1} + \zeta_{q+1}^{-1}\right) & \text{if } p=2, \\
\Q\left(\sqrt{q^{*}}, \zeta_{q-1} + \zeta_{q-1}^{-1}, \zeta_{q+1} + \zeta_{q+1}^{-1}\right) & \text{{if }} p>2.
\end{cases}
\end{equation*}
\end{theorem}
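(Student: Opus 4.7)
The plan is to work directly from the classical character table of $\SL_2(\F_q)$, due to Jordan, Schur, and Frobenius. First I would list the irreducible characters and group them into families: the trivial and Steinberg characters, which take rational values; the principal series $\chi_\alpha$, indexed by pairs $\{\alpha,\alpha^{-1}\}$ of characters $\alpha$ of the split torus $\F_q^\times$ with $\alpha^2 \neq 1$; the discrete series $\theta_\phi$, indexed by pairs $\{\phi,\phi^{-1}\}$ of characters $\phi$ of the non-split torus of order $q+1$ with $\phi^2 \neq 1$; and, when $p$ is odd, four exceptional characters arising from the unique quadratic characters of the two tori. For each family and each class of semisimple, unipotent, or central conjugacy classes I would record the precise form of the entries of the character table.

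The crucial observations from this table are as follows. On a non-central split semisimple element $\mathrm{diag}(x, x^{-1})$ with $x \in \F_q^\times$, one has $\chi_\alpha(\mathrm{diag}(x,x^{-1})) = \alpha(x) + \alpha(x^{-1})$, which lies in $\Q(\zeta_{q-1}+\zeta_{q-1}^{-1})$. On a non-central non-split semisimple element with eigenvalues $y, y^q = y^{-1}$ in $\F_{q^2}^\times$, one has $\theta_\phi = -(\phi(y) + \phi(y^{-1}))$, lying in $\Q(\zeta_{q+1}+\zeta_{q+1}^{-1})$. The principal and discrete series take integer values on unipotent and central classes, and vanish across the ``other'' torus. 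Only the exceptional characters contribute something new: when $p$ is odd, their values on the two unipotent classes are of the form $\tfrac{1}{2}(\varepsilon_1 + \varepsilon_2 \sqrt{q^*})$ with $\varepsilon_i \in \{\pm 1\}$, the appearance of $\sqrt{q^*}$ being the classical quadratic Gauss sum evaluation
\begin{equation*}
\sum_{x \in \F_q^\times} \chi_2(x)\, \psi(x) = \sqrt{q^*},
\end{equation*}
where $\chi_2$ is the order-two character of $\F_q^\times$ and $\psi$ is a nontrivial additive character.

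From this table I would deduce both inclusions. For $K(\SL_2(\F_q))$ contained in the stated field, each character table entry is either rational, a sum of the shape $\alpha(x)+\alpha(x^{-1})$, a sum of the shape $\phi(y)+\phi(y^{-1})$, or (only when $p$ is odd) an explicit expression in $\sqrt{q^*}$. For the reverse inclusion, each generator is produced explicitly: a primitive $\alpha$ and a generator $x$ of $\F_q^\times$ give $\chi_\alpha(\mathrm{diag}(x,x^{-1})) = \zeta_{q-1}+\zeta_{q-1}^{-1}$; a primitive $\phi$ and a generator of the order-$(q+1)$ subgroup of $\F_{q^2}^\times$ give $\zeta_{q+1}+\zeta_{q+1}^{-1}$ via $-\theta_\phi$; and when $p$ is odd, the difference of the two exceptional characters of a single family evaluated on a unipotent element produces $\pm\sqrt{q^*}$. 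The main obstacle will be the exceptional odd-$p$ case, where one must verify that the two unipotent classes (which merge when $p=2$) really do distinguish the exceptional characters via a multiple of $\sqrt{q^*}$ with the correct sign $q^* = (-1)^{(q-1)/2}q$, and that no further square roots enter through the central classes. Both pieces of bookkeeping reduce to the Gauss sum evaluation above; once it is in place, the rest of the argument is a direct audit of the character table.
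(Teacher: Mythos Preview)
Your proposal is correct and follows essentially the same approach as the paper: both arguments proceed by direct inspection of the explicit character table of $\SL_2(\F_q)$, separating the odd and even $q$ cases, and reading off that the semisimple classes contribute the real cyclotomic generators $\zeta_{q\pm 1}+\zeta_{q\pm 1}^{-1}$ while, for odd $q$, the split unipotent classes contribute $\sqrt{q^*}$ via the exceptional half-characters. The only cosmetic difference is that the paper obtains the table by restricting from $\GL_2(\F_q)$ following Fulton--Harris, whereas you invoke the classical Jordan--Schur--Frobenius table and make the Gauss-sum origin of $\sqrt{q^*}$ explicit.
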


Regarding Thompson's speculation for these groups, we obtain the following result.

\begin{theorem}\label{Thm6}
If there is an element of order $\ell^r$ in $\SL_{2}\left(\F_{q}\right)$, then the following are true.
\begin{enumerate}
\item If $p=2$, then we have that
\begin{equation*}
K_{\ell^r}\left(\SL_{2}\left(\F_{q}\right)\right) = \begin{cases} \Q\left(\zeta_{\ell^r} + \zeta_{\ell^r}^{-1}\right) & \text{ if } q \equiv \pm 1 \pmod{\ell^r}, \\
\Q & \text{ if } q \equiv 0 \pmod{\ell^r}.
\end{cases}
\end{equation*}
\item If $p>2$, then we have that
\begin{equation*}
K_{\ell^r}\left(\SL_{2}\left(\F_{q}\right)\right) = \begin{cases} \Q\left(\zeta_{\ell^r} + \zeta_{\ell^r}^{-1}\right) & \text{ if } q \equiv \pm 1 \pmod{\ell^r}, \\
\Q\left(\sqrt{q^{*}}\right) & \text{ if } q \equiv 0 \pmod{\ell^r}.
\end{cases}
\end{equation*}
\end{enumerate}
\end{theorem}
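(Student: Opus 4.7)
The plan is to invoke the classical character table of $\SL_{2}(\F_{q})$ (see e.g.\ Fulton--Harris, Chapter 5) and perform a case analysis on the possible prime power orders $\ell^{r}$. Using the Jordan decomposition $g = g_{s} g_{u}$ and the fact that in $\SL_{2}$ every nilpotent matrix squares to zero, every unipotent element satisfies $g_{u}^{p} = I$, so has order $1$ or $p$. Thus an element of order $\ell^{r}$ is either semisimple (forcing $\ell \neq p$) or unipotent (forcing $\ell = p$ and $r = 1$). The two maximal tori of $\SL_{2}(\F_{q})$ are cyclic, of orders $q - 1$ (split) and $q + 1$ (non-split); accordingly, existence of a semisimple element of order $\ell^{r}$ with $\ell \neq p$ forces $q \equiv \pm 1 \pmod{\ell^{r}}$, while existence of a unipotent element corresponds exactly to the $q \equiv 0 \pmod{\ell^{r}}$ case (and forces $r = 1$).

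For the semisimple case with $q \equiv 1 \pmod{\ell^{r}}$ and $\ell \neq p$, let $a$ be an element of exact order $\ell^{r}$ in the split torus. The principal series characters, parametrized by characters $\alpha$ of $\F_{q}^{\times}$ with $\alpha^{2} \neq 1$, take value $\alpha(a) + \alpha^{-1}(a)$ at $a$ and vanish on non-split semisimple classes. Choosing $\alpha$ so that $\alpha(a) = \zeta_{\ell^{r}}$ yields the character value $\zeta_{\ell^{r}} + \zeta_{\ell^{r}}^{-1}$, which alone generates $\Q(\zeta_{\ell^{r}} + \zeta_{\ell^{r}}^{-1})$. Conversely, the trivial, Steinberg, and (when $p > 2$) the four exceptional characters of degree $(q \pm 1)/2$ all take rational integer values on semisimple elements, while the principal and discrete series values are sums of the form $\zeta_{\ell^{r}}^{k} + \zeta_{\ell^{r}}^{-k}$; all these lie in $\Q(\zeta_{\ell^{r}} + \zeta_{\ell^{r}}^{-1})$. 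The case $q \equiv -1 \pmod{\ell^{r}}$ is entirely parallel: one uses an element $b$ of order $\ell^{r}$ in the non-split torus, the discrete series characters indexed by characters $\beta$ of the norm-one subgroup of $\F_{q^{2}}^{\times}$, and the formula $-(\beta(b) + \beta(b)^{-1})$ (obtained from $b^{q} = b^{-1}$).

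Finally, for the unipotent case $\ell = p$ and $r = 1$: when $p = 2$, the character table of $\SL_{2}(\F_{q})$ contains no exceptional characters and every value at the unique non-identity unipotent class lies in $\{0, \pm 1\}$, so $K_{p}(\SL_{2}(\F_{q})) = \Q$. When $p > 2$, the four exceptional characters of degree $(q \pm 1)/2$ take values of the form $\tfrac{1}{2}(\pm 1 \pm \sqrt{q^{*}})$ on the two unipotent classes of $\SL_{2}(\F_{q})$, by a classical quadratic Gauss sum evaluation, while every other irreducible character takes rational values on unipotent elements; hence $K_{p}(\SL_{2}(\F_{q})) = \Q(\sqrt{q^{*}})$. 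The main obstacle is ensuring that the character values exhibit a \emph{primitive} $\ell^{r}$-th root of unity contribution, so that the generated field equals $\Q(\zeta_{\ell^{r}} + \zeta_{\ell^{r}}^{-1})$ rather than a proper subfield; this is resolved by the freedom to choose $\alpha$ (resp.\ $\beta$) faithful on the cyclic subgroup $\langle a \rangle$ (resp.\ $\langle b \rangle$), which is possible because both tori are cyclic.
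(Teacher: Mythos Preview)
Your proposal is correct and follows essentially the same approach as the paper: both arguments amount to inspecting the explicit character table of $\SL_{2}(\F_q)$ (as presented in Fulton--Harris) and reading off the values on each conjugacy class according to the order of its elements. Your organization via the Jordan decomposition into semisimple versus unipotent cases is slightly cleaner than the paper's split by the parity of $q$, but the substance is identical.
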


\begin{remark}
 C. Bonnaf\'e \cite{Bonnafe} and T. Shoji \cite{Shoji1, Shoji2}
gave  a method for computing the characters of $\SL_m(\F_q)$ when
$m\geq 3$. We ask whether Theorems~\ref{Thm5} and \ref{Thm6} can be extended
using their work.
\end{remark}

This paper is organized as follows.  In Section \ref{rank2}, we recall some classic facts from the representation theory of $\mathrm{GL}_2\left(\F_q\right)$ and $\mathrm{SL}_2\left(\F_q\right)$.
These facts allow us to deduce Theorems~\ref{Thm5} and \ref{Thm6}, and Theorems~\ref{Thm1} and \ref{Thm2} when $m=2$.  In Section \ref{rankm}, we recall
essential features of the classical work of Green \cite{G} on the representation theory of $\mathrm{GL}_m\left(\F_q\right)$, which we then use to prove Theorems \ref{Thm1} and \ref{Thm2} for ranks $m\geq3$.

\section*{Acknowledgements}
\noindent
The authors thank John Duncan, John G. Thompson and Pham Tiep for helpful conversations.

\section{Rank 2 Finite Linear Groups}\label{rank2}

In Section~\ref{GLrank2} we establish the rank 2 cases of Theorems~\ref{Thm1} and \ref{Thm2}, and we prove Theorems~\ref{Thm5} and \ref{Thm6} in Section \ref{SLrank2}.

\subsection{The $\GL_2\left(\F_q\right)$ cases}\label{GLrank2}

For completeness, we recall the construction of the character tables  of $\GL_{2}\left(\F_q\right)$  following the treatment in \cite{FH}.
It is well-known that $\FF_{q^2}^{\times}$ is isomorphic to a large cyclic subgroup of $\GL_{2}\left(\F_{q}\right)$.
Let $q$ be a power of a prime, and let $\epsilon$ be a generator of $\F_{q}^{\times}$.  We now make explicit the characters of $\GL_{2}(\F_q)$.  When $q$ is odd, $1$ and $\sqrt{\epsilon}$ form a basis for $\F_{q^2}^{\times}$ as a vector space over $\F_q$, and so an element $\zeta \in \F_{q^2}^{\times}$ is of the form $\zeta = x + \sqrt{\epsilon}\,y$.  For an element $\zeta \in \F_{q^2}^{\times}$ with $q$ odd, define the matrix $d_{\zeta}$ by
\begin{equation*}
d_{\zeta} := \begin{pmatrix} x & \epsilon y \\ y & x \end{pmatrix}.
\end{equation*}
We have the isomorphism $\{d_{\zeta} \} \cong \F_{q^2}^{\times}$ where we have made the identification $d_{\zeta} \leftrightarrow \zeta$.  When $q$ is even, we similarly identify $\zeta \in \F_{q^2}^{\times}$ with the matrix $d_{\zeta} \in \GL_{2}(\F_q)$ so that the order of $d_{\zeta}$ in $\GL_{2}(\F_q)$ is equal to the order of $\zeta$ in $\F_{q^2}^{\times}$.
Using the notation above, the conjugacy classes of $\GL_{2}\left(\F_{q}\right)$ are given in the table below.

\smallskip
\begin{center}
  \begin{tabular}{|c|c|c|}
    \hline
     Representative & $\#$ elements & $\#$ classes  \\ \hline
    $a_{x} = \left(\begin{smallmatrix} x & 0 \\ 0 & x \end{smallmatrix}\right)$ & $1$ & $q-1$ \\ \hline
    $b_{x} = \left(\begin{smallmatrix} x & 1 \\ 0 & x \end{smallmatrix}\right)$ & $q^2 -1$ & $q-1$  \\ \hline
    $c_{x,y} =\left(\begin{smallmatrix} x & 0 \\ 0 & y \end{smallmatrix}\right),$ \, $x \neq y$ & $q^2 +q$ & $\dfrac{(q-1)(q-2)}{2}$  \\ \hline
    $d_{\zeta}, \, \zeta \in \F_{q^{2}} \setminus \F_{q}$ & $q^2 -q$ & $\dfrac{q(q-1)}{2}$  \\ \hline
  \end{tabular}
  
  \hskip.15in
  
{\text {\rm Table 1. Conjugacy classes of  $\GL_{2}\left(\F_{q}\right)$}}
\end{center}
\smallskip

We now turn to the construction of the character table.
The group $\FF_{q}^{\times} = \{1, a, \dots, a^{q-2} \}$ has $q-1$ characters, say
$\alpha_{k}: \FF_{q}^{\times} \to \C^{\times},$ defined by
$\alpha_k(a):= \zeta_{q-1}^{k-1}$
for $1\leq k\leq q-1$.  Similarly, the multiplicative group $\FF_{q^2}^\times$ has $q^2-1$ characters which we denote by $\phi_{n}$, for $1 \leq n \leq q^2 -1$.  

We employ these characters to describe $\mathrm{Irr}\left(\GL_2\left(\F_q\right)\right)$, the $q^2 -1$ irreducible representations of $\GL_{2}\left(\FF_q\right)$.  The permutation representation of $\GL_{2}\left(\FF_q\right)$ on $\mathbb{P}^{1}\left(\FF_q\right)$ has dimension $q+1$ and contains the trivial representation.  The complementary $q$-dimensional representation is irreducible and will be denoted by $V$.  There are also $q-1$ irreducible one-dimensional representations $U_{k}$ given by $U_{k}(g) = \alpha_{k}(\det(g))$.  Note that $\det\left(d_{\zeta}\right)= N_{\FF_{q^2}/\FF_{q} }(\zeta) = \zeta^{q+1}$.  The $q-1$ representations given by $V_{k} = V \otimes U_{k}$ are also irreducible.  For each pair $\alpha_j,\alpha_k$ of characters of $\FF_{q}^{\times}$, there is a character $\psi$ of the subgroup $B$,  the Borel subgroup consisting of upper triangular matrices,
such that
$$\psi:\begin{pmatrix} a & b \\ 0 & d \end{pmatrix}\longmapsto\alpha_{j}(a) \alpha_{k}(d).$$
Let $W_{j,k}$ be the representation of $\GL_{2}\left(\FF_{q}\right)$ induced from the representation of $B$ associated to $\psi$.  Note that $W_{j,j} \cong U_{j} \oplus V_{j}$ and $W_{j,k} \cong W_{k,j}$, and that there are $(q-1)(q-2)/2$ representations $W_{j,k}$ of dimension $q+1$ for $j\neq k$.  It can be shown that the final $q(q-1)/2$ irreducible representations arise from first inducing the characters $\phi$ of $\FF_{q^2}^{\times}$ with $\phi \neq \phi^{q}$ and $\phi |_{\FF_{q}^{\times}}=\alpha_{k}$ for some $k,$ and then recognizing that ${\rm Ind}(\phi)$ and $V_{1} \otimes W_{j,1}$ contain many of the same representations.  It can quickly be checked that $\chi_{\phi} = \chi_{V_{1} \otimes W_{j,1}} - \chi_{W_{j, 1}} - \chi_{{\rm Ind}(\phi)}$ is the character of an irreducible representation of $V_{1} \otimes W_{j,1}$ which we denote by $X_{\phi}$.  The values of this character are given in Table 2 below. To summarize, we have the following table.

\smallskip
\begin{center}
\resizebox{17cm}{!}{
  \begin{tabular}{|c|c c c c|}
    \hline
    $\GL_{2}\left(\FF_{q}\right)$ & $a_{x} = \left(\begin{matrix} x & 0 \\ 0 & x \end{matrix}\right)$ & $b_{x} = \left(\begin{matrix} x & 1 \\ 0 & x \end{matrix}\right)$ & $c_{x,y} = \left(\begin{matrix} x & 0 \\ 0 & y \end{matrix}\right)$, \, $x\neq y$ & $d_{\zeta}$, \, $\zeta\in\F_{q^2}\setminus\F_q$ \\ \hline
    $U_{k}$ & $\alpha_{k}\left(x^2\right)$ & $\alpha_{k}\left(x^2\right)$ & $\alpha_{k}(xy)$ & $\alpha_{k}\left(\zeta^{q+1}\right)$ \\
    $V_{k}$ & $q\alpha_{k}\left(x^2\right)$ & $0$ & $\alpha_{k}(xy)$ & $-\alpha_{k}\left(\zeta^{q+1}\right)$ \\
    $W_{j,k}$ & $(q+1)\alpha_{j}(x) \alpha_{k}(x)$ & $\alpha_{j}(x) \alpha_{k}(x)$ & $\alpha_{j}(x) \alpha_{k}(y) + \alpha_{j}(y) \alpha_{k}(x)$ & $0$ \\
    $X_{\phi}$ & $(q-1)\phi(x)$ & $-\phi(x)$ & $0$ & $-\left(\phi(\zeta) + \phi\left(\zeta^{q}\right)\right)$ \\ \hline
  \end{tabular}
  }
  
\hskip.15in
  
{\text {\rm Table 2. Character table of  $\GL_{2}\left(\F_{q}\right)$}}
 \smallskip
\end{center}

 We now describe the character values of $\GL_2\left(\F_q\right)$ based on the group orders.  To this end, we first determine these orders.  
 It is clear that the order of $a_x$ in $\GL_2\left(\F_q\right)$ is equal to the order of $x$ in $\FF_{q}^{\times}$.  Therefore, for each $d\mid(q-1)$, there are $\varphi(d)$ conjugacy classes $\left[a_x\right]$ consisting of elements of order $d$, where $\varphi$ denotes Euler's totient function.
   Notice that $$b_{x}^{n} = \begin{pmatrix} x^n & n x^{n-1} \\ 0 & x^n \end{pmatrix},$$ so in order for $b_{x}^{n}$ to be the identity matrix, we must have that the order of $x$ divides $n$ and $n \equiv 0 \pmod{p}$.  Since the order of $x$ is coprime to $p$, we have that for each $d\mid(q-1)$, there are $\varphi(d)$ conjugacy classes $\left[b_x\right]$ consisting of elements of order $pd$.  
   The order of $c_{x,y}$ is the least common multiple of the order of $x$ and the order of $y$.  Therefore, for each $d>1$ with $d\mid(q-1)$, there are $\varphi(d) \left( d - \frac{\varphi(d) + 1}{2} \right)$ conjugacy classes $\left[c_{x,y}\right]$ consisting of elements of order $d$.  
   Finally, for each $d\mid\left(q^2 -1\right)$ with $d\nmid (q-1)$, there are $\varphi(d)/2$ conjugacy classes $\left[d_\zeta\right]$ consisting of elements of order $d$.

Define $K_{d}(G):= \Q \left(\{ \chi(g): g \in G \text{ has order } d \text{ and } \chi \in \text{Irr}(G)\}\right)$.  We have the following lemma relating character values at elements of order $d$ to those at elements of order $pd$.
\begin{lemma}\label{L1}
If $d\mid(q-1)$, then
$
K_{d}\left(\GL_2\left(\F_{q}\right)\right) = K_{pd}\left(\GL_{2}\left(\F_{q}\right)\right) = \Q\left(\zeta_{d}\right).
$
\end{lemma}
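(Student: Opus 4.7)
The plan is to prove both equalities by showing two inclusions. The containments $K_d\left(\GL_2\left(\F_q\right)\right),\, K_{pd}\left(\GL_2\left(\F_q\right)\right)\subseteq\Q(\zeta_d)$ follow from direct inspection of Table 2, while the reverse containments come from exhibiting explicit irreducible character values that generate $\zeta_d$ over $\Q$.

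For the upper bound, first observe that since $d\mid q-1$, any element of $\F_{q^2}^\times$ of order dividing $d$ actually lies in the unique subgroup $\F_q^\times\leq\F_{q^2}^\times$ of order $q-1$, so no class $[d_\zeta]$ with $\zeta\in\F_{q^2}\setminus\F_q$ contains an element of order $d$ or $pd$. Since also $\gcd(p,q-1)=1$, the elements of $\GL_2\left(\F_q\right)$ of order $d$ are exactly those in classes $[a_x]$ or $[c_{x,y}]$ with $x,y\in\F_q^\times$ of orders dividing $d$, and the elements of order $pd$ are exactly those in classes $[b_x]$ with $\mathrm{ord}(x)=d$. Each corresponding entry of Table 2 is a polynomial in $\alpha_k(x)$, $\alpha_k(y)$, or $\phi(x)$ with $x,y$ of order dividing $d$; since $\alpha_k$ and the restriction $\phi|_{\F_q^\times}$ send such elements to $d$-th roots of unity, every such character value lies in $\Q(\zeta_d)$.

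For the reverse inclusions, the case $d=1$ is immediate, so assume $d>1$ and fix $x\in\F_q^\times$ of order $d$. Then $c_{x,1}$ is a well-defined conjugacy class representative of order $d$, and the one-dimensional characters $U_k$ give $U_k\left(c_{x,1}\right)=\alpha_k(x)$. As $k$ ranges over $\{1,\dots,q-1\}$, the map $k\mapsto\alpha_k(x)$ factors through the surjection $\Z/(q-1)\Z\twoheadrightarrow\Z/d\Z$, so its image is precisely the group of $d$-th roots of unity; in particular $\zeta_d\in K_d\left(\GL_2\left(\F_q\right)\right)$. For $K_{pd}$, the element $b_x$ has order $pd$ (using $\gcd(d,p)=1$), and for any $k\neq 1$ the irreducible character $W_{1,k}$ satisfies $W_{1,k}\left(b_x\right)=\alpha_1(x)\alpha_k(x)=\alpha_k(x)$, so choosing $k$ appropriately yields $\zeta_d\in K_{pd}\left(\GL_2\left(\F_q\right)\right)$.

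The one point that requires a little care is that the naive first choice -- the scalar class $a_x$, or the character $U_k$ evaluated at $b_x$ -- yields only $\alpha_k\left(x^2\right)$, which generates the proper subfield $\Q\left(\zeta_{d/\gcd(d,2)}\right)$ when $d$ is even. Using $c_{x,1}$ and $W_{1,k}$ instead avoids this extraneous squaring, and is really the only step in the proof that requires any attention; everything else is a mechanical read-off from the character table.
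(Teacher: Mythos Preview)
Your proof is correct and follows essentially the same approach as the paper: both use $U_k$ evaluated at $c_{x,1}$ to obtain $\zeta_d$ in $K_d$, and both use $W_{1,k}$ (the paper writes $W_{j,1}$) evaluated at $b_x$ to obtain $\zeta_d$ in $K_{pd}$. Your version is somewhat more careful than the paper's in that you explicitly verify the upper bound $K_d,K_{pd}\subseteq\Q(\zeta_d)$ and flag why the naive choices $a_x$ or $U_k(b_x)$ only reach $\alpha_k(x^2)$; the paper leaves these points implicit.
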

\begin{proof}
The conjugacy classes $\left[a_{x}\right]$ and $\left[c_{x,y}\right]$ may contain elements of order $d\mid(q-1)$.  It suffices to restrict our attention to the values of the character $U_k$ at elements in the conjugacy classes $c_{x,1}$, where $x$ has order $d$.  The value of $U_2$ is $\alpha_{2}(xy) = \alpha_{2}(x) = \zeta_{q-1}^{t}$ for some $0\leq t\leq q-2$.  Since the order of $x$ is $d$, we must have that $\zeta_{q-1}^{t} = \zeta_{d}^{r}$ for some $r$ coprime to $d$.  In fact, cycling through all of the characters $\alpha_{k}$ shows that every $d$th root of unity actually appears as a character value for $d\mid(q-1)$.  

We now turn to the conjugacy classes $\left[b_x\right]$ with elements of order $pd$.  For such elements $b_x$, the corresponding element $x$ of $\F_q^\times$ must have order $d$.  Therefore, the desired $d$th roots of unity arise as the values of the characters $W_{j,1}$, $1\leq j\leq q-1$, and the values of $X_{\phi}$.
\end{proof}

\begin{lemma}\label{L2}
Suppose that $d\mid\left(q^2 -1\right)$ but $d \nmid (q-1)$.  Then
\begin{equation*}
K_{d}\left(\GL_{2}\left(\FF_{q}\right)\right) = \Q\left(\left\{ \zeta_{d}^{r} + \zeta_{d}^{qr}: 1 \leq r \leq d \right\}\right).
\end{equation*}
\end{lemma}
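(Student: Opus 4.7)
The plan hinges on identifying, under the hypothesis $d\mid q^2-1$ and $d\nmid q-1$, which conjugacy classes of $\GL_2(\F_q)$ contain elements of order $d$. Since $p\nmid q^2-1$, the classes $[b_x]$ are excluded; and $[a_x]$, $[c_{x,y}]$ have orders dividing $q-1$, so are also excluded. Only the classes $[d_\zeta]$ with $\zeta\in\F_{q^2}^{\times}$ of order $d$ remain, and from Table~2 the relevant character values there are $\pm\alpha_k(\zeta^{q+1})$ (from $U_k,V_k$), $0$ (from $W_{j,k}$), and $-\bigl(\phi(\zeta)+\phi(\zeta^q)\bigr)$ (from $X_\phi$).

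Let $F$ denote the right-hand side of the claimed equality. The key structural observation is that $F$ is the subfield of $\Q(\zeta_d)$ fixed by the involution $\tau_q:\zeta_d\mapsto\zeta_d^q$ (it is an involution precisely because $d\mid q^2-1$ and $d\nmid q-1$). For the containment $K_d(\GL_2(\F_q))\subseteq F$, the $X_\phi$ values are of the shape $-(\zeta_d^r+\zeta_d^{qr})$ and so lie in $F$ by construction. For the $U_k$ and $V_k$ values, $\zeta^{q+1}\in\F_q^{\times}$ has order $e:=d/\gcd(d,q+1)$, which divides $q-1$; so $\alpha_k(\zeta^{q+1})$ is an $e$-th root of unity, and since $\tau_q$ fixes $\zeta_e$ (as $e\mid q-1$), we get $\Q(\zeta_e)\subseteq F$, completing this direction.

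For the reverse inclusion $F\subseteq K_d(\GL_2(\F_q))$, fix a generator $g$ of $\F_{q^2}^{\times}$, set $\zeta=g^{(q^2-1)/d}$, and identify $\zeta_d=\zeta_{q^2-1}^{(q^2-1)/d}$. With $\phi_n(g)=\zeta_{q^2-1}^n$, one has $\phi_n(\zeta)=\zeta_d^n$, and $\phi_n\ne\phi_n^q$ iff $(q+1)\nmid n$. Hence for any $r$, whenever there exists $n\equiv r\pmod{d}$ with $(q+1)\nmid n$, the character value $-X_{\phi_n}(d_\zeta)=\zeta_d^r+\zeta_d^{qr}$ already lies in $K_d(\GL_2(\F_q))$. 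This solubility fails only when both $(q+1)\mid d$ and $(q+1)\mid r$. In that exceptional case, writing $d=(q+1)e$ and $r=(q+1)r'$, one checks $d\mid (q^2-1)r'$, so $\zeta_d^{qr}=\zeta_d^r$, giving $\zeta_d^r+\zeta_d^{qr}=2\zeta_e^{r'}$ with $\zeta_e:=\zeta_d^{q+1}$. Meanwhile $\zeta^{q+1}=a^{(q-1)/e}\in\F_q^{\times}$ (where $a=g^{q+1}$ generates $\F_q^{\times}$) has order $e$, and as $k$ ranges over $1,\dots,q-1$ the values $U_k(d_\zeta)=\alpha_k(\zeta^{q+1})$ exhaust every $e$-th root of unity, so $\zeta_e^{r'}\in K_d(\GL_2(\F_q))$, closing the argument.

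The hard part is precisely this exceptional case $(q+1)\mid d$ and $(q+1)\mid r$: the $X_\phi$ construction breaks down, because every $\phi_n$ with $n\equiv r\pmod{d}$ then satisfies $\phi_n=\phi_n^q$ and therefore gives no irreducible $X_\phi$; on the other hand the $U_k$ values alone span only $\Q(\zeta_e)$, which is a proper subfield of $\Q(\zeta_d)$. The saving coincidence is that the very hypothesis causing the obstruction also forces the target $\zeta_d^r+\zeta_d^{qr}$ to collapse down into $\Q(\zeta_e)$, where the $U_k$ values are then sufficient.
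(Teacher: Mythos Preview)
Your proof is correct and follows the same route as the paper's---identifying that only the classes $[d_\zeta]$ carry elements of order $d$ and reading off the $X_\phi$ column of Table~2---but it is substantially more complete. The paper's argument is two sentences: it names the classes $[d_\zeta]$ and asserts that the sums $\zeta_d^r+\zeta_d^{qr}$ ``arise from the values of the characters $X_\phi$.'' You supply what the paper leaves implicit: for the inclusion $K_d\subseteq F$ you check that the $U_k,V_k$ values $\pm\alpha_k(\zeta^{q+1})$, being $e$-th roots of unity with $e\mid q-1$, already lie in the $\tau_q$-fixed field; and for $F\subseteq K_d$ you confront the constraint $\phi\neq\phi^q$, isolate the obstruction $(q+1)\mid d$, $(q+1)\mid r$, and observe that precisely there the target collapses to $2\zeta_e^{r'}$, which the $U_k$ values cover. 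The paper does not mention this edge case at all, so your write-up is the more careful one.
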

\begin{proof}
The only conjugacy classes containing elements of order $d\mid\left(q^2-1\right)$ with $d\nmid(q-1)$ are the classes $\left[d_{\zeta}\right]$, where $\zeta=x+\sqrt{\epsilon}\,y$ has order $d$ in $\F_{q^2}^\times$.  It is then clear that the desired sums $\zeta_d^r+\zeta_d^{qr}$ arise from the values of the characters $X_{\phi}$ at these elements.
\end{proof}

It is natural to ask which fields are generated by the sums of roots of unity appearing in Lemma \ref{L2}.  Here we make some initial observations for small values of $d$.  In order for the conditions of Lemma \ref{L2} to hold, we must have that $q^2 \equiv 1 \pmod{d}$ and $q \not\equiv 1 \pmod{d}$.  For some small values of $d$, the only solutions to this pair of congruences are $q \equiv -1 \pmod{d}$.  In these special cases, we can identify the fields in question.  
For example, if $d=3,4,6$ and $q\equiv-1\pmod{d}$, then $\zeta_{d}^{r} + \zeta_{d}^{-r} \in \Z$, and so $K_{d}\left(\GL_{2}\left(\FF_{q}\right)\right) = \Q$.  
If $d=5$ and $q \equiv -1 \pmod{d}$, then $\zeta_{5}^{r} + \zeta_{5}^{-r} = \frac{1}{2} \left( -1+ \left(\frac{r}{5} \right) \sqrt{5} \right)$, and so $K_{5}\left(\GL_{2}\left(\FF_{q}\right)\right) = \Q(\sqrt{5})$.  
If $d=8$ and $q\equiv-1\pmod{d}$, then a similar argument implies that
\begin{equation*}
K_{8}\left(\GL_2\left(\F_{q}\right)\right) = \begin{cases} \Q\left(\zeta_{8}\right) & \text{ if } q \equiv 1 \pmod{8}, \\
\Q\left(\sqrt{-2}\right) & \text{ if } q \equiv 3 \pmod{8}, \\
\Q(i) & \text{ if } q \equiv 5 \pmod{8}, \\
\Q\left(\sqrt{2}\right) & \text{ if } q \equiv 7 \pmod{8}.
\end{cases}
\end{equation*}

For general orders $d$, Lemmas \ref{L1} and \ref{L2} imply the following two theorems, which are restatements of Theorems \ref{Thm1} and \ref{Thm2} for $m=2$.
\begin{theorem}\label{Thm3}
We have that
\begin{equation*}
K\left(\GL_{2}\left(\F_{q}\right)\right) = \Q \left(\zeta_{q-1}, \left\{\zeta_{q^2 -1}^{r} + \zeta_{q^2 -1}^{qr} \ : \ 1 \leq r \leq \frac{q^2 -1}{2}\right\} \right).
\end{equation*}
\end{theorem}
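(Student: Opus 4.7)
The plan is to combine Lemmas~\ref{L1} and~\ref{L2} with the enumeration of element orders coming from Table~1. The field $K(\GL_2(\F_q))$ is the compositum of the fields $K_d(\GL_2(\F_q))$ as $d$ ranges over the orders of elements of $\GL_2(\F_q)$. From the discussion preceding Lemma~\ref{L1}, these orders fall into three families: (i) $d \mid q-1$ arising from the conjugacy classes $[a_x]$ and $[c_{x,y}]$; (ii) $pd$ with $d \mid q-1$ arising from $[b_x]$; and (iii) $d \mid q^2-1$ with $d \nmid q-1$ arising from $[d_\zeta]$.

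Lemma~\ref{L1} collapses the contribution of families (i) and (ii) to the compositum of the $\Q(\zeta_d)$ for $d \mid q-1$, namely $\Q(\zeta_{q-1})$. Lemma~\ref{L2} describes family (iii): each $K_d$ is generated by the sums $\zeta_d^r + \zeta_d^{qr}$ for $1 \leq r \leq d$. Substituting $\zeta_d = \zeta_{q^2-1}^{(q^2-1)/d}$ rewrites each such sum as $\zeta_{q^2-1}^s + \zeta_{q^2-1}^{qs}$ with $s = r(q^2-1)/d$, and every integer $s \in \{1, \ldots, q^2-1\}$ arises in this way for some valid $(d,r)$; the sums whose associated $d$ divides $q-1$ are already absorbed into $\Q(\zeta_{q-1})$. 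Putting this together yields
\begin{equation*}
K(\GL_2(\F_q)) = \Q\bigl(\zeta_{q-1},\; \{\zeta_{q^2-1}^s + \zeta_{q^2-1}^{qs}: 1 \leq s \leq q^2-1\}\bigr).
\end{equation*}

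The final step is to reduce the range of $s$ to the lower half. Setting $\alpha_s := \zeta_{q^2-1}^s + \zeta_{q^2-1}^{qs}$, a direct computation shows $\overline{\alpha_s} = \alpha_{q^2-1-s}$, so every upper-half sum is the complex conjugate of a lower-half sum. Since $K(\GL_2(\F_q))$ is closed under complex conjugation (because $\overline{\chi}$ is again an irreducible character of $\GL_2(\F_q)$ for every irreducible $\chi$), dropping the upper half should leave the generated field unchanged, provided the restricted-range field $\Q(\zeta_{q-1}, \{\alpha_s: 1 \leq s \leq (q^2-1)/2\})$ is itself stable under complex conjugation.

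I expect verifying this last closure to be the main obstacle. To settle it, I would embed everything inside $\Q(\zeta_{q^2-1})$ and apply cyclotomic Galois theory: the restricted-range field should coincide with the compositum of $\Q(\zeta_{q-1})$ with the fixed field $\Q(\zeta_{q^2-1})^{\langle \tau_q \rangle}$ of $\tau_q\colon \zeta_{q^2-1} \mapsto \zeta_{q^2-1}^q$, which is manifestly stable under the complex-conjugation automorphism $\tau_{-1}$ and hence already contains every $\overline{\alpha_s}$. This identification would be verified via a degree and orbit count for the action of $(\Z/(q^2-1)\Z)^\times$ on the set of $\alpha_s$'s.
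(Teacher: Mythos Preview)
Your approach is essentially the paper's: the paper simply asserts that Lemmas~\ref{L1} and~\ref{L2} imply Theorem~\ref{Thm3} without further comment, and you have supplied the straightforward compositum argument that justifies this. Your extra care about the halved range $1\le r\le (q^2-1)/2$ is well placed, since the paper does not address it, and your proposed resolution via the fixed field $\Q(\zeta_{q^2-1})^{\langle\tau_q\rangle}$ is correct.

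One streamlining: you do not actually need to verify conjugation-stability of the restricted-range field separately. Once you observe (as you propose) that every $\alpha_s$ is fixed by $\tau_q$ and hence both the full-range and half-range fields sit inside $\Q(\zeta_{q^2-1})^{\langle\tau_q\rangle}$, it suffices to note that $\alpha_1=\zeta_{q^2-1}+\zeta_{q^2-1}^{q}$ already generates this fixed field. Indeed, the Galois conjugates of $\alpha_1$ over $\Q$ are the $\alpha_s$ with $s\in\Z_{q^2-1}^\times$, and $\alpha_s=\alpha_{s'}$ forces $\{s,qs\}=\{s',qs'\}$ modulo $q^2-1$; since $q\not\equiv 1$, each such orbit has size $2$, giving exactly $\phi(q^2-1)/2$ distinct conjugates. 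Thus $\Q(\alpha_1)=\Q(\zeta_{q^2-1})^{\langle\tau_q\rangle}$, and since $\alpha_1$ lies in the restricted range, both fields coincide with it. (The generator $\zeta_{q-1}=\zeta_{q^2-1}^{q+1}$ is also fixed by $\tau_q$, so it is redundant but harmless.)
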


The previous theorem, and the following theorem addressing Thompson's speculation, prove the rank 2 cases
of Theorems~\ref{Thm1} and \ref{Thm2}.

\begin{theorem}\label{Thm4} Suppose that $\ell$  is an odd prime and that
there is an element of order $\ell^r$ in $\GL_{2}\left(\F_{q}\right)$. Then we have that
\begin{equation*}
K_{\ell^r}\left(\GL_{2}\left(\FF_{q}\right)\right) = \begin{cases} 
\Q\left(\zeta_{\ell^r}\right) & \text{if } q \equiv 1 \pmod{\ell^r}, \\
\Q\left(\zeta_{\ell^r} + \zeta_{\ell^r}^{-1}\right) & \text{if } q \equiv -1 \pmod{\ell^r}, \\
\Q & \text{if } q \equiv 0 \pmod{\ell^r}.
\end{cases}
\end{equation*}
\end{theorem}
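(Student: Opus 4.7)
The plan is to split into three cases according to the residue of $q$ modulo $\ell^r$, handling the first two via Lemmas \ref{L1} and \ref{L2} and the third by reading off Table 2 on a single unipotent class. The first step is to verify that the hypothesis of the theorem forces one of these three cases. If $\ell \neq p$, then an element of order $\ell^r$ must lie in a class $[a_x]$, $[c_{x,y}]$, or $[d_\zeta]$, so either $\ell^r \mid q-1$ or $\ell^r \mid q^2-1$. Since $\ell$ is odd it cannot divide both $q-1$ and $q+1$ simultaneously, so the second alternative refines to $\ell^r \mid q+1$, i.e., $q \equiv -1 \pmod{\ell^r}$. If instead $\ell = p$, then the only conjugacy classes contributing a factor of $p$ to element orders are the $[b_x]$, and each $b_x$ has order $p\cdot \ord(x)$ with $\gcd(\ord(x),p)=1$ (because $\ord(x) \mid q-1$), so the hypothesis forces $r = 1$, $x = 1$, and $q \equiv 0 \pmod{\ell^r = p}$.

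In the case $q \equiv 1 \pmod{\ell^r}$, Lemma \ref{L1} applied with $d = \ell^r$ immediately gives $K_{\ell^r}(\GL_2(\F_q)) = \Q(\zeta_{\ell^r})$. In the case $q \equiv -1 \pmod{\ell^r}$, combining $\ell^r \mid q^2-1$ with $\ell^r \nmid q-1$ lets me invoke Lemma \ref{L2}, yielding
$$K_{\ell^r}(\GL_2(\F_q)) = \Q\left(\left\{\zeta_{\ell^r}^s + \zeta_{\ell^r}^{qs} : 1 \leq s \leq \ell^r\right\}\right).$$
Substituting $q \equiv -1$, each generator becomes $\zeta_{\ell^r}^s + \zeta_{\ell^r}^{-s}$, which is a polynomial in the single element $\zeta_{\ell^r} + \zeta_{\ell^r}^{-1}$ (essentially a Chebyshev polynomial), so the whole field collapses to $\Q(\zeta_{\ell^r} + \zeta_{\ell^r}^{-1})$. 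In the case $q \equiv 0 \pmod{\ell^r}$, the preliminary bookkeeping shows that $\ell^r = p$ and that the only relevant conjugacy class is $[b_1]$; reading Table 2 with $x = 1$ gives $U_k(b_1) = 1$, $V_k(b_1) = 0$, $W_{j,k}(b_1) = 1$, and $X_\phi(b_1) = -1$ for every choice of the characters $\alpha_k, \alpha_j, \phi$, all of which are rational, so $K_{\ell^r}(\GL_2(\F_q)) = \Q$.

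The main obstacle is the opening bookkeeping: one must carefully inspect Table 1 to pin down which element orders can equal $\ell^r$ under each congruence hypothesis, so that the correct rows of Table 2 (and hence the correct preceding lemma) are invoked. Once this trichotomy is established, the remainder of the proof reduces to substitution into Lemmas \ref{L1} and \ref{L2}, together with a one-line verification on the unipotent class.
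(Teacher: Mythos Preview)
Your proof is correct and follows essentially the same approach as the paper: the paper simply states that Theorem~\ref{Thm4} is implied by Lemmas~\ref{L1} and~\ref{L2}, and you have spelled out exactly how, including the preliminary trichotomy and the unipotent case. Note that your third case ($\ell=p$, $r=1$) is in fact also a special case of Lemma~\ref{L1} with $d=1$, which gives $K_p=K_{p\cdot 1}=\Q(\zeta_1)=\Q$, so one need not read off Table~2 separately; but your direct verification is of course also fine.
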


\begin{rmk}
The number field $\Q\left(\zeta_{\ell^r} + \zeta_{\ell^r}^{-1}\right)$ is the maximal totally real subfield of the cyclotomic field $\Q\left(\zeta_{\ell^r}\right)$ and has degree $\frac{\ell^{r-1}(\ell -1)}{2}$ over $\Q$.
\end{rmk}

The case when $\ell=2$ is more complicated than the cases described above.  In this case, it is possible that $q \equiv 2t+1 \pmod{2^r}$ for any $0 \leq t \leq 2^{r-1} -1$.  However, there are still a few congruence classes of $q$ that yield simple fields.  For example, if $q \equiv \pm1 \pmod{2^{r}}$, then the conclusion of Theorem \ref{Thm4} holds.  If $q \equiv 2^{r-1} \pm 1 \pmod{2^r}$, then we have that
\begin{equation*}
K_{2^r}\left(\GL_2\left(\F_q\right)\right)=\begin{cases} \Q\left(\zeta_{2^{r-1}} + \zeta_{2^{r-1}}^{-1}, \zeta_{2^{r}} - \zeta_{2^r}^{-1}\right) & \text{if }q \equiv 2^{r-1} -1 \pmod{2^r}, \\
\Q\left(\zeta_{2^{r-1}}\right) & \text{if }q \equiv 2^{r-1} +1 \pmod{2^r}.
\end{cases}
\end{equation*}

\subsection{Proof of Theorems~\ref{Thm5} and \ref{Thm6}}\label{SLrank2}

As in the previous subsection, the proofs follow from the explicit  description of the character tables of $\SL_2\left(\F_q\right)$.
We consider the cases where $q$ is odd (resp. even) separately.

\subsubsection{The case when $q$ is odd}\label{ss1}
When $q$ is odd, the conjugacy class $\left[b_{1}\right]$ in $\GL_2\left(\F_q\right)$ splits into two conjugacy classes $\left[b_{1,1}\right]$ and $\left[b_{1, \epsilon}\right]$ in $\SL_2\left(\F_q\right)$, and the conjugacy class $\left[b_{-1}\right]$ splits into $\left[b_{-1,1}\right]$ and $\left[b_{-1, \epsilon}\right]$.  These split conjugacy classes are given by the following table.

\smallskip
\begin{center}
  \begin{tabular}{|c|c|c|}
    \hline
     Representative & $\#$ elements & $\#$ classes  \\ \hline
    $\left(\begin{smallmatrix} 1 & 0 \\ 0 & 1 \end{smallmatrix}\right)$ & $1$ & $1$ \\ \hline
    $\left(\begin{smallmatrix} -1 & 0 \\ 0 & -1 \end{smallmatrix}\right)$ & $1$ & $1$ \\ \hline
    $b_{1,1}=\left(\begin{smallmatrix} 1 & 1 \\ 0 & 1 \end{smallmatrix}\right)$ & $\dfrac{q^2 -1}{2}$ & $1$  \\ \hline
    $b_{1, \epsilon} =\left(\begin{smallmatrix} 1 & \epsilon \\ 0 & 1 \end{smallmatrix}\right)$ & $\dfrac{q^2 -1}{2}$ & $1$  \\ \hline
    $b_{-1,1}=\left(\begin{smallmatrix} -1 & 1 \\ 0 & -1 \end{smallmatrix}\right)$ & $\dfrac{q^2 -1}{2}$ & $1$  \\ \hline
    $b_{-1, \epsilon} =\left(\begin{smallmatrix} -1 & \epsilon \\ 0 & -1 \end{smallmatrix}\right)$ & $\dfrac{q^2 -1}{2}$ & $1$  \\ \hline
    $c_{x} = \left(\begin{smallmatrix} x & 0 \\ 0 & x^{-1} \end{smallmatrix}\right), \, x \neq \pm 1$ & $q^2 +q$ & $\dfrac{q-3}{2}$  \\ \hline
    $d_{\zeta}= \left(\begin{smallmatrix} x & \epsilon y \\ y & x \end{smallmatrix}\right), \, x \neq \pm 1, \, y \neq 0, \, \zeta^{q+1}=1$ & $q^2 -q$ & $\dfrac{q-1}{2}$  \\ \hline
  \end{tabular}
  
  \hskip.15in
  
{\text {\rm Table 3. Split conjugacy classes of  $\SL_{2}\left(\F_{q}\right)$} for odd $q$}
 \smallskip
\end{center}

Define the set $C := \left\{ \zeta \in \F_{q^2}^{\times}: \zeta^{q+1}=1 \right\}$.  
Here we give the restrictions of the representations of $\GL_{2}\left(\F_{q}\right)$ to $\SL_2\left(\F_q\right)$.  All of the representations $U_{k}$ restrict to the trivial representation $U$.  
The restriction $V$ of $V_{k}$ is also an irreducible representation.  
The restriction $W_{j}$ of $W_{j,1}$ is irreducible if $\alpha_{j}^{2} \neq 1$.  
It is also clear that $W_{j} \cong W_{k}$ if $j= \pm k$, and so there are $(q-3)/2$ irreducible representations $W_j$ of dimension $q+1$.  
If $\tau$ is the nontrivial quadratic character of $\F_{q}^{\times}$, then $W_{\tau} = W' \oplus W''$, where both $W'$ and $W''$ are irreducible representations.  
The restriction of $X_{\phi}$ is determined by the restriction of $\phi$ to $C$.  
The restrictions of $\phi$ and $\phi^{-1}$ to $C$ yield the same character, so that if $\phi^2 \neq 1$, then there are $(q-1)/2$ irreducible representations of dimension $q-1$.  
If $\psi$ is the nontrivial quadratic character of $C$, then $X_{\psi} = X' \oplus X''$, where both $X'$ and $X''$ are irreducible.  
To complete the character table of $\SL_2\left(\F_q\right)$, it remains to identify the irreducible representations $W', W'', X'$ and $X''$.  
By studying the index two subgroup $H \subset \GL_{2}\left(\F_{q}\right)$ of matrices with square determinant, one can show that $W'$ and $W''$ are conjugate representations of dimension $(q+1)/2$, and $X'$ and $X''$ are conjugate representations of dimension $(q-1)/2$.  
On non-split conjugacy classes, the character values of $W',W''$ and $X',X''$ are half the character values of $W_j$ and $X_{\psi}$, respectively.  
On the split conjugacy classes, the character values can be determined from character relations.

The character table of $\SL_2\left(\F_q\right)$ in the case when $q$ is odd is shown below.  We note that $\tau(-1) = \psi(-1) = (-1)^{(q-1)/2}$ and $\tau(\epsilon) = -1$.

\smallskip
\begin{center}
\resizebox{17cm}{!}{
  \begin{tabular}{|c|c c c c|}
    \hline
     $\SL_{2}\left(\FF_{q}\right)$ & $2$ & $\dfrac{q^2 -1}{2}$ & $q^2 +q$ & $q^2 -q$ \\
   $q$ odd & $\begin{pmatrix} \pm1 & 0 \\ 0 & \pm1 \end{pmatrix}$ & $b_{x,y} = \begin{pmatrix} x & y \\ 0 & x \end{pmatrix}, \, x \in \{\pm 1\}, \, y \in \{1, \epsilon \}$ & $c_{x} = \begin{pmatrix} x & 0 \\ 0 & x^{-1} \end{pmatrix}, \, x \neq \pm 1$ & $d_{\zeta} = \begin{pmatrix} x & \epsilon y \\ y & x \end{pmatrix}, \, x \neq \pm 1, \, y \neq 0, \, \zeta^{q+1}=1$ \\ \hline
    $U$ & $1$ & $1$ & $1$ & $1$ \\
    $V$ & $q$ & $0$ & $1$ & $-1$ \\
    $W_{k}, \, \alpha_{k}^2 \neq 1$ & $(q+1)\alpha_{}(\pm 1)$ & $\alpha_{k}(x)$ & $\alpha_{k}(x) +\alpha_{k}\left(x^{-1}\right)$ & $0$ \\
    $X_{\phi}, \, \phi^2 \neq 1$ & $(q-1)\phi(\pm1)$ & $-\phi(x)$ & $0$ & $-\left(\phi(\zeta) + \phi\left(\zeta^{-1}\right)\right)$ \\
    $W'$ & $[(q+1)/2] \, \tau(\pm1)$ & $[\tau(x)/2] \left( 1 + \tau(y) \sqrt{q^{*}} \right)$ & $\tau(x)$ & $0$ \\
    $W''$ & $[(q+1)/2] \, \tau(\pm 1)$ & $[\tau(x)/2] \left( 1 - \tau(y) \sqrt{q^{*}} \right)$ & $\tau(x)$ & $0$ \\
    $X'$ & $[(q-1)/2] \, \psi(\pm1)$ & $[\tau(x)/2] \left( -1 + \tau(y) \sqrt{q^{*}} \right)$ & $0$ & $-\psi(y)$ \\
    $X''$ & $[(q-1)/2] \, \psi(\pm1)$ & $[\tau(x)/2] \left( -1 - \tau(y) \sqrt{q^{*}} \right)$ & $0$ & $-\psi(\zeta)$ \\ \hline
  \end{tabular}
  }
  
  \hskip.15in
  
{\text {\rm Table 4. Character table of  $\SL_{2}\left(\F_{q}\right)$ for odd $q$}}
 \smallskip
  
\end{center}

We see that $\pm I_2$ has order $1$ or $2$, and all the character values of $\pm I_2$ are integers.  The matrix $b_{x,y}$ has order $p$ when $x=1$ and order $2p$ when $x=-1$, and some of the character values of $b_{x,y}$ include a factor of $\sqrt{q^{*}}$.  For each $d\mid(q-1)$, there are matrices $c_{x}$ and $d_\zeta$ of order $d$ which yield character values of the form $\zeta_{d}^{r} + \zeta_{d}^{-r}$. This completes the proof of Theorems~\ref{Thm5} and \ref{Thm6} when $q$ is odd.

\subsubsection{The case when $q$ is even}\label{ss2}
When $q$ is even, the conjugacy classes do not split as they do when $q$ is odd.  The conjugacy classes are given in the following table.
 
 \smallskip
 \begin{center}
  \begin{tabular}{|c|c|c|}
    \hline
     Representative & $\#$ elements & $\#$ classes  \\ \hline
    $\left(\begin{smallmatrix} 1 & 0 \\ 0 & 1 \end{smallmatrix}\right)$ & $1$ & $1$ \\ \hline
    $\left(\begin{smallmatrix} 1 & 1 \\ 0 & 1 \end{smallmatrix}\right)$ & $q^2 -1$ & $1$  \\ \hline
    $c_{x} = \left(\begin{smallmatrix} x & 0 \\ 0 & x^{-1} \end{smallmatrix}\right), \, x \neq \pm 1$ & $q^2 +q$ & $\dfrac{q-2}{2}$  \\ \hline
    $d_{\zeta}, \, \zeta^{q+1}=1 \neq \zeta$ & $q^2 -q$ & $\dfrac{q}{2}$ \\ \hline
  \end{tabular}

\hskip.15in
  
{\text {\rm Table 5. Conjugacy classes of  $\SL_{2}\left(\F_{q}\right)$} for even $q$}
\end{center}
 \smallskip

It is straightforward to work out the representations of $\mathrm{SL}_2\left(\mathbb{F}_q\right)$ in this case using the same method as \cite{FH}, by restricting the representations of $\mathrm{GL}_2\left(\mathbb{F}_q\right)$.  The restrictions of the representations are similar to those in the case when $q$ is odd, except that $W_{j}$ and $X_{\phi}$ no longer split at the nontrivial quadratic characters.  We have the following character table.

\vspace{-.4cm}

\smallskip

\begin{center}
\resizebox{17cm}{!}{
  \begin{tabular}{|c|c c c c|}
    \hline
     $\SL_{2}\left(\FF_{q}\right)$ & $1$ & $q^2 -1$ & $q^2 +q$ & $q^2 -q$ \\
   $q$ even & $\begin{pmatrix} 1 & 0 \\ 0 & 1 \end{pmatrix}$ & $\begin{pmatrix} 1 & 1 \\ 0 & 1 \end{pmatrix}$ & $c_{x} = \begin{pmatrix} x & 0 \\ 0 & x^{-1} \end{pmatrix}, \, x \neq \pm 1$ & $d_{\zeta}, \, \zeta^{q+1}=1 \neq \zeta$ \\ \hline
    $U$ & $1$ & $1$ & $1$ & $1$ \\
    $V$ & $q$ & $0$ & $1$ & $-1$ \\
    $W_{j}$ & $q+1$ & $1$ & $\alpha_{j}(x) +\alpha_{j}\left(x^{-1}\right)$ & $0$ \\
    $X_{\phi}$ & $q-1$ & $-1$ & $0$ & $-\left(\phi(\zeta) + \phi\left(\zeta^{-1}\right)\right)$ \\ \hline
  \end{tabular}
  }
  
  \hskip.15in
  
{\text {\rm Table 6. Character table of  $\SL_{2}\left(\F_{q}\right)$ for even $q$}}
 \smallskip
\end{center}

\smallskip

The order of $I_2$ is 1, and the order of the element in the second column of the character table is $2$.  Both of these elements yield integer character values.  For each $d\mid(q-1)$, there are matrices $c_{x}$ and $d_\zeta$ of order $d$ which yield character values of the form $\zeta_{d}^{r} + \zeta_{d}^{-r}$.
This completes the proof of Theorems~\ref{Thm5} and \ref{Thm6} when $q$ is even.

\section{$\GL_m\left(\F_q\right)$ for Dimensions $m\geq 3$}\label{rankm}

Here we define the notation required to describe the representation theory of $\mathrm{GL}_m\left(\F_q\right)$ for general rank $m\geq3$, and we prove Theorems~\ref{Thm1} and \ref{Thm2} for ranks $m\geq 3$. 

\subsection{Proof of Theorem~\ref{Thm1}}
We first describe the conjugacy classes of $\mathrm{GL}_m\left(\F_q\right)$.  Let $f(t)$ be the monic polynomial $$f(t) = \sum_{i=0}^{d} a_{i} t^{i} \in \F_{q}[t]$$ with $a_{d}=1$, and let $U_{1}(f)$ be the companion matrix of $f$.  For any positive integer $n$, define the Jordan block matrix
\begin{equation*}
U_{n}(f) := \begin{pmatrix} U_{1}(f) & I_{d} & 0 & \cdots & 0 \\
0 & U_{1}(f) & I_{d} & \cdots & 0 \\
\vdots & \vdots & \ddots & \ddots & \vdots \\
\vdots & \vdots & \vdots & \ddots & I_{d} \\
0 & 0 & 0 & \cdots & U_{1}(f) \end{pmatrix}.
\end{equation*}
For a partition $\lambda = (\lambda_{1}, \lambda_{2}, \dots, \lambda_{k})$ of $m$, let $U_\lambda(f)$ be the matrix defined by
\begin{equation*}
U_{\lambda}(f) := {\rm{diag}} \left \{U_{\lambda_{1}}(f), U_{\lambda_{2}}(f), \dots, U_{\lambda_{k}}(f) \right \} = \bigoplus_{i=1}^{k} U_{\lambda_{i}}(f).  
\end{equation*}
Let $A \in \GL_{m}\left(\F_q\right)$ with characteristic polynomial
\begin{equation*}
f_{A} = f_{1}^{k_{1}} f_{2}^{k_{2}} \cdots f_{N}^{k_{N}},
\end{equation*}
where $f_{i} \neq t$ are irreducible polynomials over $\F_q$.  Then $A$ is conjugate to its Jordan canonical form
\begin{equation*}
{\rm{diag}} \left \{ U_{\lambda_{1}}(f_{1}), U_{\lambda_{2}}(f_{2}), \dots, U_{\lambda_{N}}(f_{N}) \right \} =\bigoplus_{i=1}^{N} U_{\lambda_{i}}(f_{i})
\end{equation*}
where $\lambda_{i}$ is some partition of $k_{i}$.

Conjugacy classes in $\GL_{m}\left(\F_q\right)$ are defined by $$\left(\left\{f_{i} \right\}, \left\{d_{i} \right\}, \left\{k_{i} \right\}, \left\{\lambda_{i} \right\}\right)_{i=1}^{N},$$ where $f_{i} \neq t$ is an irreducible polynomial over $\F_{q}$ of degree $d_{i}$, $\lambda_{i}$ is a partition of $k_{i}$, $N$ is called the length of the conjugacy class, and $$\sum_{i=0}^{N} k_{i} d_{i} = m.$$  We call a conjugacy class primary if $f_{i}=1$ for all $i$ except one.  We say that two conjugacy classes 
\begin{equation*}
\left(\left\{f_{i} \right\}, \left\{d_{i} \right\}, \left\{k_{i} \right\}, \left\{\lambda_{i} \right\}\right)_{i=1}^{N} \quad \text{and} \quad \left(\left\{g_{i} \right\}, \left\{e_{i} \right\}, \left\{w_{i} \right\}, \left\{\nu_{i} \right\}\right)_{i=1}^{N'}
\end{equation*}
are of the same type if $N=N'$ and there is a permutation $\sigma \in S_{m}$ such that $e_{\sigma(i)} = d_{i}$, $w_{\sigma(i)} = k_{i}$, and $\nu_{\sigma(i)} = \lambda_{i}$, but the $f_{i}$ and $g_{i}$ are allowed to differ.  When $f_i$ and $g_i$ have different roots, they yield two different conjugacy classes of the same type.

In \cite{G}, Green shows that the values of an irreducible character of $\GL_{m}\left(\F_q\right)$ on classes of the same type can be described by a single formula.  Since $f_{i}$ is irreducible of degree $d_{i}$, the roots of $f_{i}$ are of the form $\alpha_{i}, \alpha_{i}^{q}, \dots, \alpha_{i}^{q^{d_{i}-1}}$ for some $\alpha_{i} \in \F_{q^{d_{i}}}^{\times}$, where $\alpha_i$ is not in $F_{q^{\delta}}^{\times}$ for any divisor $\delta$ of $d_{i}$.  Elements in the conjugacy class $\left(\left\{f_{i} \right\}, \left\{d_{i} \right\}, \left\{k_{i} \right\}, \left\{\lambda_{i} \right\}\right)_{i=1}^{N}$ have order given by ${\rm{lcm}} \left( \mathrm{ord}\left(\alpha_{i}\right) \right)$, where $\mathrm{ord}\left(\alpha_i\right)$ is the order of $\alpha_{i}$ in $\F_{q^{d_{i}}}^{\times}$.

The values of the cuspidal characters are straightforward to describe.  Let $\theta$ be a character of $\F_{q^m}^{\times}$, and denote its conjugate characters by $\theta_{i} = \theta^{q^i}$ for each $0 \leq i \leq m-1$.  We say that $\theta$ is non-decomposable if all of its conjugates are distinct.  For an irreducible polynomial $f(t) \in \F_{q}[t]$ of degree $d = {\rm{deg}}(f)$ with roots $\alpha, \alpha^{q}, \dots, \alpha^{q^{d-1}}$, define
\begin{equation*}
\theta(f) = \sum_{k=0}^{d-1} \theta \left(\alpha^{q^k} \right).
\end{equation*}
If $d\mid m$, then we define the cuspidal character $\chi_\theta$ to be the class function
\begin{equation*}
\chi_{\theta}(g) = \begin{cases}
\phi_{f}(q) \theta(f) & \text{if } [g] \ \text{is the primary class associated to} \ f, \\
0 & \text{otherwise},
\end{cases}
\end{equation*}
where $\phi_{f}(q) \in \Z$.  If $\theta$ is a  non-decomposable character of $\F_{q^{m}}^{\times}$, then $(-1)^{m-1} \chi_{\theta}$ is an irreducible character of $\GL_{m}\left(\F_{q}\right)$.  In fact, Theorem 13 of \cite{G} shows that $\chi$ is an irreducible character of $\GL_{m}\left(\F_q\right)$ if and only if it is a product of (possibly repeated) cuspidal characters such that the sum of the degrees of the associated polynomials is $m$.  This proves Theorem \ref{Thm1}.

\subsection{Proof of Theorem~\ref{Thm2}}
Recall that
\begin{equation*}
\left|\GL_{m}\left(\F_q\right)\right| = q^{\frac{m(m-1)}{2}} \prod_{k=1}^{m} \Phi_{k}(q)^{\lfloor m/k \rfloor},
\end{equation*}
where $\Phi_{k}$ is the $k$th cyclotomic polynomial.  Therefore, if there is an element of order $\ell^r$ in $\GL_{m}\left(\F_q\right)$, then either $\ell^r=p$ or $\ell^r\mid \Phi_{k}(q)$ for some $k \leq m$.  It is clear from the description of the characters above that in the first case, if $\ell^r=p$, then all of the character values are integers.  In the second case, if $k_{r} = {\rm{ord}}_{\ell^r}(q)$ is multiplicative order of $q$ modulo $\ell^r$, then $\ell^r \mid \Phi_{k_r}(q)$ and $\ell^r\nmid\Phi_{k_s}(q)$ for all $s<r$.  Therefore, an element of order exactly $\ell^r$ must come from a conjugacy class associated to an irreducible polynomial of degree $k_r$.

The order of an element in $\GL_{m}\left(\F_q\right)$ is the least common multiple of the orders of the roots of the polynomials associated to its conjugacy class.  For an element of order $\ell^r$, these orders must be powers $\ell^a$ with $a\leq r$.  Therefore, elements of order $\ell^r$ must come from conjugacy classes associated to an irreducible degree $k_r$ polynomial and possibly also irreducible degree $k_{a}$ polynomials with $a<r$.  The character values at these elements are integer multiples of
\begin{align*}
&\prod_{a=1}^{r} \left( \sum_{t=0}^{k_{a}-1} \theta_{a} \left(\alpha_{a}^{q^{t}} \right) \right) = \prod_{a=1}^{r} \left(\sum_{t=0}^{k_{a}-1} \zeta_{\ell^a}^{i_{a} q^{t}} \right),
\end{align*}
where $i_{r} \in \Z_{\ell^r}^\times$ and $i_{a} \in \Z_{\ell^a}$ for $a<r$.

We define the following notation for the lemma below.  Let $G_{\ell^r} := {\rm{Gal}}\left(\Q\left(\zeta_{\ell^r}\right)/ \Q\right)$, so that $G_{\ell^r}\cong\Z_{\ell^r}^\times$.  Define $\tau_{q} \in G_{\ell^r}$ to be the automorphism such that $\tau_{q}(\zeta_{\ell^r}) = \zeta_{\ell^r}^{q}$.
Theorem~\ref{Thm2} follows as a consequence of the following lemma.

\begin{lemma}\label{Lemma3.1}
We have that
\begin{equation*}
\Q \left(\prod_{a=1}^{r} \left(\sum_{t=0}^{k_{a}-1} \zeta_{\ell^a}^{i_{a} q^{t}} \right) \right) = \Q(\zeta_{\ell^r})^{\langle \tau_{q} \rangle},
\end{equation*}
with $i_{r} \in \Z_{\ell^r}^\times$, is the unique subfield of $\Q(\zeta_{\ell^r})$ of degree $\frac{\ell^{r-1}(\ell -1)}{k_r}$ over $\Q$.
\end{lemma}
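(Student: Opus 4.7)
The plan is to compute the stabilizer of $\alpha := \prod_{a=1}^r S_a$, with $S_a := \sum_{t=0}^{k_a-1} \zeta_{\ell^a}^{i_a q^t}$, inside $G_{\ell^r} \cong \Z_{\ell^r}^\times$. The easy inclusion is that $\tau_q$ cyclically permutes the $k_a$ summands of each $S_a$ (by definition of $k_a = \ord_{\ell^a}(q)$), so $\tau_q$ fixes every factor and hence $\alpha$. This yields $\alpha \in K := \Q(\zeta_{\ell^r})^{\langle \tau_q \rangle}$ and $[\Q(\alpha):\Q] \le [K:\Q] = \ell^{r-1}(\ell-1)/k_r$. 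Since $G_{\ell^r}$ is cyclic for odd $\ell$, uniqueness of a subfield of this degree is automatic, so the main work is the reverse inequality, equivalently $\mathrm{Stab}(\alpha) = \langle \tau_q \rangle$.

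To exploit the hypothesis $i_r \in \Z_{\ell^r}^\times$, I would work in the tower $\Q \subseteq \Q(\zeta_{\ell^{r-1}}) \subseteq \Q(\zeta_{\ell^r})$ and factor $\alpha = S_r \cdot \beta$ with $\beta := \prod_{a=1}^{r-1} S_a \in \Q(\zeta_{\ell^{r-1}})$. Splitting each exponent as $i_r q^t = u_t + \ell v_t$ with $0 \le u_t < \ell$, one expands $S_r = \sum_{j=0}^{\ell-1} C_j \, \zeta_{\ell^r}^j$, where $C_j = \sum_{t:\, u_t = j} \zeta_{\ell^{r-1}}^{v_t} \in \Q(\zeta_{\ell^{r-1}})$, in the basis $\{1, \zeta_{\ell^r}, \ldots, \zeta_{\ell^r}^{\ell-1}\}$ of $\Q(\zeta_{\ell^r})$ over $\Q(\zeta_{\ell^{r-1}})$. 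Because $i_r$ is a unit mod $\ell$, the support of $(C_j)$ lies in the non-empty orbit $i_r \langle q \rangle \pmod \ell \subseteq (\Z/\ell\Z)^\times$. The subgroup $T := \{s \in \Z_{\ell^r}^\times : s \equiv 1 \pmod{\ell^{r-1}}\}$ of order $\ell$ fixes $\Q(\zeta_{\ell^{r-1}})$ pointwise and sends $\zeta_{\ell^r}^j \mapsto \zeta_{\ell^r}^j \zeta_\ell^{tj}$ when $s = 1 + \ell^{r-1}t$; if $\beta \ne 0$, then $\sigma_s(\alpha) = \alpha$ for $s \in T$ collapses to $\sigma_s(S_r) = S_r$, and matching coefficients in the above basis forces $tj \equiv 0 \pmod \ell$ for every $j$ in the support, hence $t = 0$.

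This pins down $\mathrm{Stab}(\alpha) \cap T$. To upgrade it to the full stabilizer, I would iterate through the filtration $G_{\ell^r} \twoheadrightarrow G_{\ell^{r-1}} \twoheadrightarrow \cdots \twoheadrightarrow G_{\ell}$, passing each time to the quotient and applying the same reasoning inductively to the sub-product $\beta \in \Q(\zeta_{\ell^{r-1}})$ (governed by the restricted automorphism $\tau_q|_{\Q(\zeta_{\ell^{r-1}})}$). At layer $a$ the argument forces any stabilizing $s$ to lie in $\langle q \rangle \pmod{\ell^a}$; combining the constraints across $a = 1, \ldots, r$ gives $\mathrm{Stab}(\alpha) \subseteq \langle \tau_q \rangle$, which matches the upper bound of the first paragraph and completes the proof.

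The principal obstacle is the possible degeneration of the Gauss-period factors. The $\ell$-adic structure of $q$ can annihilate some coefficients $C_j$, and when $q \equiv 1 \pmod{\ell}$ with $r$ large enough the top factor $S_r$ itself can collapse, so the burden of generating $K$ may shift onto the lower-level $S_a$. The careful bookkeeping needed to secure the non-vanishing hypotheses used above (in particular of $\beta$ and of sufficiently many $C_j$), and to carry the coefficient-matching argument cleanly through each stage of the cyclotomic tower, is where the real work concentrates.
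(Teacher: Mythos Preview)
Your approach via the cyclotomic filtration is genuinely different from the paper's. The paper argues directly on the Galois orbit of the top Gauss period $S_r$: since $\langle q\rangle$ has index $\ell^{r-1}(\ell-1)/k_r$ in $\Z_{\ell^r}^\times$, there are exactly that many distinct sums $\sum_t \zeta_{\ell^r}^{sq^t}$ as $s$ runs over coset representatives. To pass from $S_r$ to the full product $\alpha$, the paper invokes a single sentence---the Galois conjugates of $\alpha$ ``are all character values, so they are linearly independent,'' hence distinct---and then matches degrees using cyclicity of $G_{\ell^r}$. There is no filtration, no coefficient-matching, and no induction.

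What each route buys: the paper's argument is dramatically shorter and ties the count directly to the coset decomposition $\Z_{\ell^r}^\times/\langle q\rangle$, at the price of a linear-independence assertion whose justification is not spelled out. Your layer-by-layer stabilizer computation is structurally more transparent and would generalize better, but as you correctly diagnose, it rests on non-vanishing hypotheses that are not free.

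The obstacle you flag is real and not merely bookkeeping. When $q\equiv 1\pmod{\ell^{r-1}}$ but $q\not\equiv 1\pmod{\ell^r}$ (so $k_r=\ell$), the orbit $\{i_r q^t\}_{t=0}^{\ell-1}$ is a full fiber of the reduction $\Z_{\ell^r}^\times\to\Z_{\ell^{r-1}}^\times$, and one computes
\[
S_r=\zeta_{\ell^r}^{i_r}\sum_{j=0}^{\ell-1}\zeta_\ell^{\,j}=0,
\]
so $\alpha=0$ and $\Q(\alpha)=\Q$. In that regime neither your coefficient-matching nor the paper's linear-independence line can succeed for that particular choice of $i_a$'s; the lemma as literally stated for arbitrary $i_a$ needs a caveat, and the generating burden for Theorem~\ref{Thm2} must be carried by a different choice of character. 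So the ``real work'' you identify at the end is not a routine cleanup---it is exactly where both arguments are incomplete.
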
 

\begin{proof}
For simplicity, we take $i_{r}=1$.  If $\tau \in G_{\ell^r}$, then $\tau(\zeta_{\ell^r}) = \zeta_{\ell^r}^{s}$ for some $s \in \Z_{\ell^r}^\times$, and so we have that
\begin{equation} \label{C}
\tau \left( \prod_{a=1}^{r} \left( \sum_{t=0}^{k_{a} -1} \zeta_{\ell^a}^{i_{a} q^{t}} \right) \right)= \prod_{a=1}^{r} \left( \sum_{t=0}^{k_{a}-1} \zeta_{\ell^a}^{i_{a}s q^t} \right)
\end{equation}
for some $s \in \Z_{\ell^r}^\times$.  Since ${\rm{ord}}_{\ell^r}(q)=k_r$, multiplication by $q$ in $\Z_{\ell^r}^\times$ can be described by $\frac{\phi(\ell^r)}{k_r}$ disjoint $k_r$-cycles.  This implies that there are $\frac{\ell^{r-1}(\ell -1)}{k_r}$ sets of distinct exponents $\{s, sq, \dots, sq^{k_r -1} \} \pmod{\ell^r}$ in the last term as $s$ varies.  Thus, as we range over all $\tau \in G_{\ell^r}$, we see that
\begin{equation*}
\tau \left(\zeta_{\ell^r} + \zeta_{\ell^r}^{q} + \cdots + \zeta_{\ell^r}^{q^{k_r -1}} \right) 
\end{equation*}
takes $\frac{\ell^{r-1}(\ell -1)}{k_r}$ distinct values.  The products in equation \eqref{C} are all character values, so they are linearly independent.  This means that the character value $$\prod_{a=1}^{r} \left( \sum_{t=0}^{k_{a} -1} \zeta_{\ell^a}^{i_{a} q^{t}} \right)$$ has $\frac{\ell^{r-1}(\ell -1)}{k_r}$ distinct Galois conjugates, and so we have that $$\left[\Q\left(\prod_{a=1}^{r} \left( \sum_{t=0}^{k_{a} -1} \zeta_{\ell^a}^{i_{a} q^{t}} \right) \right) : \Q \right] = \frac{\ell^{r-1}(\ell -1)}{k_r}.$$  On the other hand, we observe that
\begin{equation*}
\left[ \Q(\zeta_{\ell^r})^{\langle \tau_{q} \rangle} : \Q \right] = \frac{\left[ \Q(\zeta_{\ell^r}) : \Q \right]}{\left[ \Q(\zeta_{\ell^r}): \Q(\zeta_{\ell^r})^{\langle \tau_{q} \rangle} \right]} = \frac{\ell^{r-1}(\ell -1)}{k_r}.
\end{equation*}
Since $G_{\ell^r} \cong \Z_{\ell^r}^\times$ is cyclic, there is a unique subfield of $\Q(\zeta_{\ell^r})$ of degree $d$ over $\Q$ for each $d\mid\phi\left(\ell^r\right)$.  Therefore, we must have that
\begin{equation*}
\Q\left( \prod_{a=1}^{r} \left( \sum_{t=0}^{k_{a} -1} \zeta_{\ell^a}^{i_{a} q^{t}} \right) \right) = \Q(\zeta_{\ell^r})^{\langle \tau_{q} \rangle}.
\end{equation*}
This proves Lemma \ref{Lemma3.1}.
\end{proof}

\end{document}